\theoremstyle{plain}
\newtheorem{thm}{Theorem}[section]
\newtheorem{lemma}[thm]{Lemma}
\newtheorem{cor}[thm]{Corollary}
\theoremstyle{definition}
\theoremstyle{remark}
\newcommand{\nc}{\newcommand}
\def\makeop#1{\expandafter\def\csname#1\endcsname
  {\mathop{\rm #1}\nolimits}\ignorespaces}
\def\makebb#1{\expandafter\def
  \csname bb#1\endcsname{{\mathbb{#1}}}\ignorespaces}
\def\makebf#1{\expandafter\def\csname bf#1\endcsname{{\bf
      #1}}\ignorespaces} 
\def\makegr#1{\expandafter\def
  \csname gr#1\endcsname{{\mathfrak{#1}}}\ignorespaces}
\def\makescr#1{\expandafter\def
  \csname scr#1\endcsname{{\EuScript{#1}}}\ignorespaces}
\def\makecal#1{\expandafter\def\csname cal#1\endcsname{{\mathcal
      #1}}\ignorespaces} 
\def\doLetters#1{#1A #1B #1C #1D #1E #1F #1G #1H #1I #1J #1K #1L #1M
                 #1N #1O #1P #1Q #1R #1S #1T #1U #1V #1W #1X #1Y #1Z}
\def\doletters#1{#1a #1b #1c #1d #1e #1f #1g #1h #1i #1j #1k #1l #1m
                 #1n #1o #1p #1q #1r #1s #1t #1u #1v #1w #1x #1y #1z}
     \def\qed{\qedmark\medbreak}%
\def\qedmark{{\enspace\vrule height 6pt width 5pt depth 1.5pt}}%
\def\Gm{{{\bbG}_{\rm m}}}   
\def\Qbar{\overline{\bbQ}}
\newcommand{\Z}{\mathbb Z}
\newcommand{\Q}{\mathbb Q}
\newcommand{\R}{\mathbb R}
\newcommand{\C}{\mathbb C}
\newcommand{\pr}{\indent }
\newcommand{\<}{\langle}   
\renewcommand{\>}{\rangle} 
\nc{\embed}{\hookrightarrow}
\newcommand{\ch}{characteristic }
\nc{\ol}{\overline}
\nc{\wt}{\widetilde}
\nc{\opp}{\mathrm{opp}}
\begin{document}
\renewcommand{\thefootnote}{\fnsymbol{footnote}}
\setcounter{footnote}{-1}
\numberwithin{equation}{section}


\title[Mumford-Tate conjecture]
{A note on the Mumford-Tate Conjecture for CM abelian varieties}
\author{Chia-Fu Yu}
\address{
Institute of Mathematics, Academia Sinica and NCTS (Taipei Office)\\
6th Floor, Astronomy Mathematics Building \\
No. 1, Roosevelt Rd. Sec. 4 \\ 
Taipei, Taiwan, 10617} 
\email{chiafu@math.sinica.edu.tw}
\address{
The Max-Planck-Institut f\"ur Mathematik \\
Vivatsgasse 7, Bonn \\
Germany 53111} 
\email{chiafu@mpim-bonn.mpg.de}

\date{\today}
\subjclass[2010]{11G15, 14K22}
\keywords{Abelian variety, Complex Multiplication, 
Mumford-Tate conjecture} 

\begin{abstract}
The Mumford-Tate conjecture is first proved for CM abelian varieties by
H. Pohlmann [Ann. Math., 1968]. 
In this note we give another proof of this result and extend it 
to CM motives. 
\end{abstract} 

\maketitle


\section{Introduction }
\label{sec:01}

Let $A$ be an abelian variety over a field $k$, where $k$ is finitely
generated over $\Q$ and is contained in the field $\C$ of complex
numbers. 
The
Mumford-Tate group $MT(A)$ of $A_\C:=A\otimes \C$ 
is defined to be the smallest algebraic $\Q$-subgroup $G$ of $\GL(V)$ 
such that $G(\C)$ contains the image of the 
Hodge cocharacter $\mu:\C^\times \to
\GL(V_\C)$, where $V:=H_1(A(\C), \Q)$ is the first rational homology
group of $A$ and $V_\C=V\otimes \C$. 
For any rational prime $\ell$, let
$G_{A,\ell}$ denote the algebraic envelope of the associated 
$\ell$-adic Galois
representation 
\[ \rho_\ell:\Gal(\bar k/k)\to \GL(V_\ell(A)), \]
where $V_\ell(A):=T_\ell(A)\otimes_{\Z_\ell} \Q_\ell$,  $T_\ell(A)$
is the $\ell$-adic Tate module of $A$ and $\bar k$ is the
algebraic closure of $k$ in $\C$. That is, $G_{A,\ell}$ is the
Zariski closure of the image 
$\rho_\ell(\Gal(\bar k/k))$ in the algebraic $\Q_\ell$-group 
$\GL(V_\ell(A))$. Let $G_{A,\ell}^0$ denote the neutral component of 
the algebraic group $G_{A,\ell}$ over $\Q_\ell$.

Under the comparison isomorphism 
$V_\ell:=V\otimes \Q_\ell \simeq V_\ell(A)$ and hence $\GL(V)\otimes
\Q_\ell$ being identified with the $\Q_\ell$-group $\GL(V_\ell(A))$, 
the Mumford-Tate conjecture (MTC) asserts the equality of these two
algebraic subgroups
\begin{equation}
  \label{eq:1.1}
  MT(A)\otimes \Q_\ell=G_{A,\ell}^0
\end{equation}
of $\GL(V_\ell)$ for all primes $\ell$. 

Observe that the group $G_{A,\ell}^0$ is unchanged if one replaces 
the field $k$ by a finitely generated field extension $k_1$ of $k$ in
$\C$. Indeed, as one has the natural identification 
$T_\ell(A\otimes \bar k)=T_\ell(A\otimes \bar k_1)$, the action of
the Galois group $\Gal(\bar k_1/k_1)$ on the Tate module 
factors through its quotient $\Gal(\bar k/\bar k\cap k_1)$. Therefore, 
the group $G_{A,\ell}^0$ is unchanged if one
replaces $A$ by $A\otimes_k k_1$ since it remains the same after a
finite field extension base change. 
In particular, the MTC does not depend on
the choice of the finitely generated field $k$.

The meaning of the MTC asserts that  
the algebra of all $\ell$-adic Tate cohomology classes (with respect 
to all finite algebraic extensions of $k$) 
on every self-product $A^m$ of $A$
coincides with the $\Q_{\ell}$-algebra generated by Hodge classes of
$A_\C$. For CM abelian varieties, this statement is proved 
by H. Pohlmann \cite[Theorem 5]{pohlmann} and hence the MTC follows.

In this expository article we give another proof of 
the MTC for abelian varieties of CM type. 
Observe that both groups concerned are subtori of the algebraic
torus associated to the CM field in question 
(also see Section~\ref{sec:2.2}). 
We prove the equality by checking their
co-character groups. 
The proof is based on
the Taniyama-Shimura theory of complex multiplication of abelian
varieties (\cite{shimura-taniyama} also see \cite{serre-tate} and 
\cite{shimura:cm1998}), 
which is the same as used in Pohlmann's proof. Therefore, we do not
claim any novelty in this expository article. Note that although
\cite{pohlmann} is the main reference where the first case of MTC is
proved, the word ``Mumford-Tate'' does not appear in the paper. We
also found the review article by Ribet \cite[p.~216]{ribet:survey}
where he pointed out that this is a corollary of the main theorem of 
complex multiplication due to Shimura-Taniyama~\cite{shimura-taniyama}
(but without referring to Pohlmann's work). This is indeed the case, and
we are simply adding more details to this ``corollary'' here 
for the reader's convenience.

The Mumford-Tate conjecture has been verified for a large class of
important cases. 
It is proved by Deligne~\cite[I. Proposition 6.2]{dmos} 
 that the inclusion 
\[ G^0_{A,\ell}\subset MT(A)\otimes \Q_\ell\]
holds for all primes $\ell$. 
Serre has established several fundamental
tools and methods for analyzing $\ell$-adic Galois representations 
(see \cite{serre:motivic1994} and references given there). 
Among other results 
he~\cite{serre:1984-5} proved the MTC for abelian
varieties $A$ with $\End_{\bar k}(A)=\Z$ and $\dim A=2$, $6$ or odd.
Ribet has developed several methods and among other results
he proved the MTC for abelian varieties with 
real multiplication (the generic case); 
see~\cite{ribet:ajm1976, ribet:compos1976,ribet:smf1980}. 
The centralizers of $G_{A,\ell}$ and of
$MT(A)\otimes \Q_\ell$ coincide in $\GL(V_\ell)$; this follows from
Faltings' theorem~\cite{faltings:end} for Tate's conjecture on
homomorphisms of abelian varieties. 
The rank of $G_{A,\ell}$ (the dimension of a maximal subtorus) 
is independent of primes $\ell$ (see Serre~\cite[2.2.4,
p.~31]{serre:1984-5}).   
If one has the equality 
$G^0_{A,\ell}= MT(A)\otimes \Q_\ell$ for one prime $\ell$, then one has
the quality $G^0_{A,\ell}= MT(A)\otimes \Q_\ell$ for all primes $\ell$
(due to Tankeev~\cite{tankeev:96, tankeev:izv1996}, Larsen and
Pink~\cite{pink:l_mono1998,larsen-pink:mathann1998}). 


We remark that the Mumford-Tate group $MT(A)$ is the fundamental group 
of the Tannakian category generated by the Hodge structure 
$V=H_1(A_\C,\Q)$ and the Tate twists $\Q(1)$ (see \cite{dmos}). 
Similarly the group $G_{A,\ell}^0$ is the fundamental group of 
the Tannakian category generated by the 
$\rho_\ell(\Gal(\bar k/k_1))$-representation $V_\ell(A)$ and the
Tate twist $\Q_\ell(1)$, where $k_1$ is any finite extension of $k$ so
that $\rho_\ell (\Gal(\bar k/k_1))$ is contained in $G_{A,\ell}^0(\Q_\ell)$. 
Thus, the equality (\ref{eq:1.1}) asserts the equivalence of these
two Tannakian categories. Clearly these groups can be 
defined also for more general projective smooth varieties
through the singular cohomology and etale cohomology, and hence the MTC
can be formulated for more general varieties even for motives. 
In the last section, we 
show how the MTC for CM motives follows from that for CM abelian
varieties.

\section{Algebraic Tori}
\label{sec:02}

\subsection{Basic properties of $k$-tori}
\label{sec:2.1}
Let $k$ be any field of \ch zero. Let $\Gamma_k:=\Gal(\bar k/k)$
denote the absolute Galois group of $k$, where $\bar k$ is an
algebraic closure 
of $k$. Let $(\text{Diag.~groups}/k)$ denote the category of
diagonalizable groups over $k$ (see \cite{borel:lag}); this is an abelian
category. Let (Tori/$k$) denote the category of algebraic tori over
$k$; this is a full subcategory of $(\text{Diag. groups}/k)$ but not
an abelian category. 
Let $(\Z[\Gamma_k]\text{-mod})$ denote the abelian category
of finitely generated $\Z$-modules $X$ together with a continuous
action of $\Gamma_k$ (with the discrete topology on $X$ and the Krull
topology on $\Gamma_k$). Finally let (free $\Z[\Gamma_k]$-mod) denote
the full subcategory consisting of $X$ which are free as
$\Z$-modules.  To each diagonalizable group $D$ over $k$, we
associate the character group $X^*(D)$ and the cocharacter group
$X_*(D)$ as follows:
\[ X^*(D):=\Hom_{\bar k}(D,\Gm), \quad  X_*(D):=\Hom_{\bar
  k}(\Gm,D). \]
These are finitely generated $\Z$-modules equipped with a continuous
$\Gamma_k$-action.

We have the following basic results (see
\cite{borel:lag}).
\begin{thm}\label{2.1}\

\begin{enumerate}
\item The functor $X^*$ gives rise to an anti-equivalence of abelian
  categories between {\rm (Diag.~groups$/k$)} and
  {\rm ($\Z[\Gamma_k]$-mod)} which preserves the short exact
  sequences. Moreover, it induces an anti-equivalence between the
  categories {\rm (Tori$/k$)} and {\rm (free $\Z[\Gamma_k]$-mod)}. 
\item The functor $X_*$ gives rise to an equivalence of 
  categories between {\rm (Tori$/k$)} and
  {\rm (free $\Z[\Gamma_k]$-mod)}.
\item For any diagonalizable group $D$ over $k$, we have a canonical 
  isomorphism
\[ X_*(D)\simeq \Hom_{\Z}(X^*(D),\Z) \]
of $\Z[\Gamma_k]$-modules.  
\end{enumerate}
\end{thm}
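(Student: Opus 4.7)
The plan is to prove the theorem by first establishing the statements over the algebraic closure $\bar k$, where every diagonalizable group is split, and then descending via the continuous Galois action. Over $\bar k$ (a field of characteristic zero, hence perfect), a split diagonalizable group is determined up to isomorphism by its character group, a finitely generated abelian group $X$, via the assignment $X\mapsto\Spec\bar k[X]$ where $\bar k[X]$ is the group algebra. The classical computation $\Hom_{\bar k\text{-grp}}(\Spec\bar k[X],\Spec\bar k[Y])=\Hom_{\Z}(Y,X)$ (with morphisms going in the opposite direction) shows that $X^*$ is an anti-equivalence between diagonalizable $\bar k$-groups and finitely generated abelian groups, exact on both sides because both categories are abelian and the functor is additive and reflects zero objects.

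For part (1), I would upgrade this to $k$-level by Galois descent. A diagonalizable $k$-group $D$ yields its base change $D_{\bar k}$ and a continuous action of $\Gamma_k$ on $X^*(D_{\bar k})=X^*(D)$; the continuity is automatic because the action factors through $\Gal(k'/k)$ for a finite extension $k'/k$ splitting $D$. Conversely, given $X\in(\Z[\Gamma_k]\text{-mod})$, the semilinear action of $\Gamma_k$ on $\bar k[X]$ (acting on both $\bar k$ and $X$) has a fixed subalgebra $A:=\bar k[X]^{\Gamma_k}$, and by faithfully flat descent $\Spec A$ is a diagonalizable $k$-group whose character group recovers $X$. Exactness is inherited from the $\bar k$-level statement since base change $\otimes_k\bar k$ is faithfully flat, and the restriction to tori follows because the identity component of $\Spec\bar k[X]$ is a torus precisely when $X$ is torsion-free, i.e.\ when $X$ is free as a $\Z$-module.

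For part (3), I would exhibit the isomorphism via the natural composition pairing $X_*(D)\times X^*(D)\to X^*(\Gm)=\Z$, $(\lambda,\chi)\mapsto\chi\circ\lambda$, which is $\Gamma_k$-equivariant by functoriality. Over $\bar k$, after reducing to the split case $D=\Gm^r\times(\text{finite})$, a direct check shows the induced map $X_*(D)\to\Hom_\Z(X^*(D),\Z)$ is an isomorphism (cocharacters pair trivially against torsion characters, so the target is automatically $\Hom_\Z(X^*(D)_{\text{free}},\Z)$). Part (2) then follows formally by composing the anti-equivalence $X^*$ from (1) with the anti-equivalence $\Hom_\Z(-,\Z)$ on (free $\Z[\Gamma_k]$-mod), and identifying the composite with $X_*$ using (3). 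The main technical obstacle is the Galois descent step in (1): one must verify that taking $\Gamma_k$-fixed points of $\bar k[X]$ genuinely produces a $k$-Hopf algebra of the expected dimension, which reduces via the continuity hypothesis to ordinary finite Galois descent applied over a splitting field $k'/k$ together with a limit argument.
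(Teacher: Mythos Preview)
The paper does not prove this theorem; it simply records it as a standard result and refers the reader to Borel's \emph{Linear Algebraic Groups}. There is therefore no argument in the paper to compare your proposal against.

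Your sketch is the classical proof one finds in Borel (or in SGA3, or in Springer): over $\bar k$ the assignment $X\mapsto\Spec\bar k[X]$ gives the anti-equivalence with finitely generated abelian groups, then finite Galois descent along a splitting extension upgrades this to the $k$-level statement with the $\Gamma_k$-action, and (2) follows from (1) together with the perfect composition pairing in (3). This is correct. One small point: your justification of exactness in (1) (``additive and reflects zero objects'') is not by itself enough to force exactness of a functor; but since you have already shown $X^*$ is an anti-\emph{equivalence}, exactness is automatic, so the conclusion stands.
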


Using this theorem, one obtains the following consequence immediately.

\begin{cor}\label{2.2}
Let $T$ be an algebraic torus over $k$.  
\begin{enumerate}
\item There is a natural bijection between the set of algebraic
  $k$-subtori of $T$ and the set of quotient $\Z[\Gamma_k]$-modules of
  the character group $X^*(T)$ which are free $\Z$-modules.
\item There is a natural bijection between the set of algebraic
  $k$-subtori of $T$ and the set of saturated
  $\Z[\Gamma_k]$-submodules of 
  the cocharacter group $X_*(T)$.  
\end{enumerate}
\end{cor}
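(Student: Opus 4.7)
The plan is to deduce both parts of Corollary~\ref{2.2} formally from Theorem~\ref{2.1}, translating the geometric condition of ``being a subtorus'' first into a freeness condition on a character quotient, and then transporting it to cocharacters by $\Z$-duality.

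For part (1), the key observation is that a closed $k$-subgroup $S$ of the torus $T$ is, in particular, a closed subgroup of a diagonalizable group and is therefore diagonalizable. Since $X^*$ preserves short exact sequences (Theorem~\ref{2.1}(1)), the assignment $S \mapsto (X^*(T) \twoheadrightarrow X^*(S))$ gives a bijection between closed $k$-subgroups of $T$ and quotient $\Z[\Gamma_k]$-modules of $X^*(T)$. The ``moreover'' clause of Theorem~\ref{2.1}(1) then characterizes tori among diagonalizable groups as those with $\Z$-free character modules, so the subtori correspond exactly to the $\Z$-free quotients of $X^*(T)$.

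For part (2), I would transport this bijection to the cocharacter group using the canonical duality of Theorem~\ref{2.1}(3). If $X^*(T) \twoheadrightarrow Q$ has kernel $K$ and $\Z$-free target, then the short exact sequence $0 \to K \to X^*(T) \to Q \to 0$ splits as $\Z$-modules (since $Q$ is projective), so applying $\Hom_\Z(-,\Z)$ yields a short exact sequence of $\Z[\Gamma_k]$-modules
\[
  0 \to \Hom_\Z(Q,\Z) \to X_*(T) \to \Hom_\Z(K,\Z) \to 0
\]
whose cokernel is torsion-free; hence the image of $\Hom_\Z(Q,\Z) \simeq X_*(S)$ in $X_*(T)$ is saturated. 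Conversely, a saturated submodule $N \subset X_*(T)$ has $\Z$-free quotient $X_*(T)/N$; dualizing and invoking the double-duality $X^*(T) \simeq \Hom_\Z(X_*(T),\Z)$ (which applies since $X_*(T)$ is $\Z$-free of finite rank) produces a $\Z$-free quotient of $X^*(T)$, whence a subtorus by part~(1). The two constructions are visibly mutually inverse.

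The main point is conceptual rather than technical: one must recognize that ``subtorus of $T$'', ``$\Z$-free quotient $\Z[\Gamma_k]$-module of $X^*(T)$'', and ``saturated $\Z[\Gamma_k]$-submodule of $X_*(T)$'' are three equivalent formulations of the same data, interchanged by the (anti-)equivalences in Theorem~\ref{2.1} and by finite-rank $\Z$-duality. No substantive obstacle arises; the only care needed is to invoke the splitting of the relevant short exact sequence over $\Z$, which holds because the quotient is $\Z$-free and hence projective.
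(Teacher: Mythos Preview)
Your argument is correct and is precisely the unpacking the paper has in mind: the paper gives no separate proof at all, stating only that ``using this theorem, one obtains the following consequence immediately'' after Theorem~\ref{2.1}. Your two steps---recognizing subtori as the diagonalizable subgroups whose character module is $\Z$-free, and then transporting free quotients of $X^*(T)$ to saturated submodules of $X_*(T)$ via finite-rank $\Z$-duality---are exactly the content of that word ``immediately.''
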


A $\Z$-sublattice $L$ of a $\Z$-lattice $X$ 
is called {\it saturated} if the quotient abelian group 
$X/L$ is torsion-free. For
any $\Z$-sublattice $L$ of $X$ there is a unique maximal $\Z$-sublattice
$L'$ in $X$ of the same rank that contains $L$; this lattice is
saturated and is called {\it the saturation of $L$ in $X$}. Clearly,
$L'$ can be constructed by putting $L'=L\cdot \Q\cap X$. 

Let $\alpha:S\to T$ be a homomorphism of algebraic tori over $\bar
k$. We have the induced homomorphisms 
\[ \alpha_*:X_*(S)\to X_*(T) \quad \text{and}\quad  \alpha^*:
X^*(T)\to X^*(S) \]
given by $\alpha_*(\gamma)=\alpha\circ \gamma$ and $\alpha^*
\chi=\chi\circ \alpha$ for $\gamma\in X_*(S)$ and $\chi\in X^*(T)$:
\[ 
\begin{CD}
  \Gm @>\gamma>> S @>\alpha>> T
\end{CD}, \quad  
\begin{CD}
  S @>\alpha>> T @>\chi>> \Gm.
\end{CD} 
\]
The natural perfect 
pairing $X^*(T)\times X_*(T)\to \Z=\End(\Gm)$ is simply
the composition $\<\chi,\gamma\>_T=\chi\circ \gamma$.
Then we have the adjoint property:
\begin{equation}
  \label{eq:2.1}
  \<\chi, \alpha_* \gamma\>_T=\<\alpha^* \chi, \gamma\>_S, \quad
  \forall\, \chi\in X^*(T), \ \gamma\in X_*(S).   
\end{equation}
Indeed, we check $\<\chi, \alpha_* \gamma\>_T=\chi\circ \alpha\circ
\gamma=\<\alpha^* \chi, \gamma\>_S$.

\subsection{Algebraic $\Q$-tori}
\label{sec:2.2}
\def\pr{{\rm pr}}
Let $\Qbar$ denote the field of algebraic numbers in $\C$. All number
fields considered in this paper are those contained in $\C$.
For any number field $K$, denote by $T^K=\Res_{K/\Q} \bbG_{m,K} $ the
associated algebraic $\Q$-torus. Denote by 
$\Sigma_K:=\Hom_\Q(K,\C)=\Hom(K,\Qbar)$ the set of embeddings of $K$;
it is equipped with the $\Gamma_{\Q}$-action by $\sigma\cdot
\phi=\sigma\circ \phi$, where $\sigma\in \Gamma_{\Q}$ and $\phi\in
\Sigma_K$.
We have an isomorphism of $\Qbar$-algebras:
\[ c: K\otimes \Qbar\simeq \Qbar^{\Sigma_K}, \quad a\otimes x \mapsto
(\phi(a)x)_{\phi}. \]
The projection at the $\phi$-component via the isomorphism $c$ gives 
a $\Qbar$-algebra homomorphism 
\[ \pr_\phi:K\otimes \Qbar\simeq \Qbar^{\Sigma_K}\to \Qbar. \]  
This defines a character and we denote this again by $\phi\in
X^*(T^K)$. Clearly $\Sigma_K$ forms a $\Z$-basis for $X^*(T^K)$.
The action ${}^\sigma\! \phi$ is defined (as the Galois action on the set
of morphisms of varieties over $\Qbar$) by the commutative diagram
\[ 
\begin{CD}
K\otimes \Qbar @>\phi>> \Qbar \\
   @V 1\otimes\sigma VV  @V \sigma VV \\
K\otimes \Qbar @>{{}^\sigma\! \phi}>> \Qbar.  
\end{CD}
\] 
We see $({}^\sigma\phi)(a\otimes \sigma(x))=\sigma (\phi(a)x)=\sigma
\phi(x)\cdot \sigma(x)$; so ${}^\sigma \phi=\sigma \phi=\sigma\cdot
\phi$ (the latter is the natural action by $\Gamma_\Q$). We obtain
$X^*(T^K)=\Z[\Sigma_K]$, the free $\Z$-module generated by $\Sigma_K$,
with the $\Gamma_\Q$-action by ${}^\sigma \!\phi=\sigma \phi$.  

Let $\Sigma_K^\vee:=\{\phi^\vee\ ;\, \phi\in \Sigma_K\}$ be the basis
of $\Hom(\Z[\Sigma_K],\Z)$ dual to $\Sigma_K$. 
As the pairing $X^*(T)\times X_*(T)\to \Z$ is
$\Gamma_\Q$-equivariant we have
\[ \<\phi', {}^\sigma \phi^\vee\>=\<{}^{\sigma^{-1}}\!\! \phi',
  \phi^\vee\>=
  \begin{cases}
    1, & \text{if ${}^{\sigma^{-1}}\!\! \phi'=\phi$ (or $\phi'={}^\sigma
    \phi$)},  \\
    0, & \text{otherwise.}
  \end{cases} \]
It follows that ${}^\sigma\! \phi^\vee=(\sigma \phi)^\vee$. So we obtain
\begin{equation}
  \label{eq:2.2}
  X_*(T^K)=\Z[\Sigma_K^\vee], \quad \text{with}\ {}^\sigma\!
\phi^\vee=(\sigma\phi)^\vee, \quad \forall\,\sigma\in \Gamma_\Q.
\end{equation}

Let $K\subset k$ be two number fields. We have the inclusion
$T^K\subset T^k$. The induced homomorphism $X^*(T^k)\to X^*(T^K)$ is
given by the restriction map $\Sigma_k\to \Sigma_K$, $\wt \phi\mapsto
\wt \phi|_K$. 
For any subset $\Phi\subset \Sigma_K$, we denote by $\wt \Phi\subset
\Sigma_k$ the preimage of this restriction map. One easily checks that
the cocharacter groups $X_*(T^K)\subset X_*(T^k)$ is given by the
relation
\begin{equation}
  \label{eq:2.3}
  \phi^\vee=\sum_{\wt \phi|_K=\phi} \wt \phi^\vee. 
\end{equation}


\section{The Main Theorem of Complex Multiplication}
\label{sec:03}

In this section we describe the main theorem of complex multiplication
due to Shimura and Taniyama~\cite{shimura-taniyama}. Our main
reference is Serre and Tate~\cite{serre-tate}. 

\subsection{The reflex type norms}
\label{sec:3.1}
Let $K$ be a CM field and $\Phi\subset \Sigma_K$ be a CM type,
i.e. the complement of $\Phi$ equals its complex conjugate. 
Let $(A,i)$ be an abelian variety over a number field $k$ of CM-type
$(K,\Phi)$. That is,  
\[ i:K\to \End_k^0(A):=\End_k(A)\otimes_\Z \Q \]
is a ring monomorphism, $2 \dim A=[K:\Q]$, and the character of the
representation of $K$ on the Lie algebra $\Lie(A_\C)$ is given by
$\sum_{\varphi\in \Phi} \varphi$ (In case the action $i$ of
$K$ is defined over $\bar k$ instead of $k$, i.e. $i:K\to
\End^0(A\otimes \bar k)$, the pair 
$(A,i)$ is said to be potentially of CM type
$(K,\Phi)$). Note that  
any complex abelian variety of CM type 
is defined over a number field \cite{shimura-taniyama}.
Replacing $k$ by a finite extension of $k$, we assume that 
$k$ is Galois over $\Q$ and contains
$K$. We usually identify $K$
with its image under $i$, that is, we view $i$ as the inclusion. 
The endomorphism algebra $\End^0_k(A)$ acts on $V:=H_1(A(\C),\Q)$ 
so that $V$ becomes a free $K$-module of rank one.  
We have the inclusion
$K\subset \End(V)$ and may regard the algebraic torus $T^K\subset
\GL(V)$ as an algebraic subgroup over $\Q$.

Set 
\[ \Phi_k:=\{\wt \phi\in \Sigma_k\ ;\ \wt \phi|_K\in \Phi \}, \] 
that is, $\Phi_k=\wt \Phi$ with respect 
to the restriction map $\Sigma_k\to \Sigma_K$.
Put $H:=\Gal(k/K)$. Let 
\[ H_E:=\{\sigma\in \Gal(k/\Q)\ ;\ \sigma \Phi=\Phi \}, \] 
and let $E\subset \Qbar$ be the fixed field of $H_E$, 
the {\it reflex field of the pair $(K,\Phi)$}. 
Notice that $\Sigma_k=\Gal(k/\Q)$ is a group
and that the inverse set $\Phi_k^{-1}$ is right stable under the
subgroup $H_E$. Therefore, the set $\Phi_k^{-1}$ descends to a unique
CM type 
$\Phi_E\subset \Sigma_E$. The norm map 
$ N_{\Phi_k^{-1}}: k^\times \to k^\times$ given by
\begin{equation}
  \label{eq:3.1}
  N_{\Phi_k^{-1}}(x):=\prod_{\sigma\in \Phi_k^{-1}} \sigma(x)
\end{equation}
defines a homomorphism 
$N_{\Phi_k^{-1}}: T^k \to T^k$ of algebraic tori over $\Q$. It factors
  through the subtorus $\psi: T^k \to T^K$ and we have the following
  commutative diagram
\begin{equation}
  \label{eq:3.2}
\begin{CD}
   T^k @>N_{\Phi_k^{-1}}>> T^k \\
   @VN_{k/E} VV  \cup   \\
   T^E @>{N_{\Phi_E}}>> T^K, \\      
\end{CD}  
\end{equation}
where $N_{k/E}$ is the usual norm map and $N_{\Phi_E}$ is the reflex
norm map defined in
the same manner as $N_{\Phi_k^{-1}}$ ($\psi=N_{\Phi_E}\circ N_{k/E}$). 
Remark that $\psi$ is the same homomorphism constructed by the
determinant map using the $(K,k)$-bimodule structure 
of the Lie algebra $\Lie(A/k)$ of $A$ in
Serre and Tate \cite[Section 7]{serre-tate}.
 
Let 
\begin{equation}
  \label{eq:3.3}
  \begin{array}[l]{l}
  \psi_0: k^\times \to K^\times, \\
  \psi_\ell: k_\ell^\times \to K^\times_\ell, \\
  \psi_\infty: k_\infty^\times \to K^\times_\infty, 
  \end{array}
\end{equation}
be the homomorphisms induced from the morphism $\psi:T^k\to T^K$ 
by evaluating at $\Q$, $\Q_\ell$ and $\R$, respectively, 
where $k_\ell=\Q_\ell\otimes k$, $K_\ell=\Q_\ell\otimes K$,
$k_\infty=\R\otimes k$ and $K_\infty=\R\otimes K$.

\subsection{The explicit reciprocity law}
\label{sec:3.2}

Let $V^k$ denote the set of all places of $k$, $V^k_\infty$
(resp. $V^k_f$) the set of
all archimedean (resp. finite) 
places of $k$, and $V^k_\ell$ the set of the finite places
lying above $\ell$. Let $S_A$ be the finite set of finite places where
$A$ has bad reduction.  

If $v\in V^k_f-S_A$, let $k(v)$, $A(v)$, $\pi_{A(v)}$ denote
respectively the residue field at $v$, the reduction of $A$ at $v$,
the Frobenius endomorphism of $A(v)$ relative to $k(v)$. The reduction
map $\End(A)\to \End(A(v))$ defines an injection
\[ i_v: K \to \End^0(A) \to \End^0(A(v)). \]
Since $\pi_{A(v)}$ commutes with every $k(v)$-endomorphism of $A(v)$,
it lies in the commutator of the image ${\rm Im} (i_v)$ of $i_v$,
which is 
again ${\rm Im}(i_v)$. Thus there is an unique element $\pi_v\in K$
such 
that $i_v(\pi_v)=\pi_{A(v)}$; we call $\pi_v$ the {\it Frobenius
element} attached to $v$. 

Let $I_k$ denote the id\`ele group of $k$. For each finite set
$S\subset V^k$, let $I_k^S\subset I_k$ denote the group of id\`eles
$(a_v)$ such that $a_v=1$ for all $v\in S$.   

The next two theorems, due to Serre and Tate~\cite[Theorems 10 and 11,
Section 7]{serre-tate}, are reformulation of
results of Shimura-Taniyama \cite{shimura-taniyama} and Weil
\cite{weil:1955}.  

\begin{thm}\label{3.1}
There exists a unique homomorphism 
\begin{equation}
  \label{eq:3.4}
  \varepsilon: I_k \to K^\times 
\end{equation}
satisfying the following three conditions:
\begin{itemize}
\item [(a)] The restriction of $\varepsilon$ to $k^\times$ is the map
  $\psi_0: k^\times \to K^\times$ defined in (\ref{eq:3.3}).
\item [(b)] The homomorphism $\varepsilon$ is continuous, in the sense
  that its kernel is open in $I_k$. 
\item [(c)] There is a finite set $S\subset V^k$ containing
  $V^k_\infty$ and $S_A$ such that 
  \begin{equation}
    \label{eq:3.5}
    \varepsilon(a)=\prod_{v\not\in S} \pi_v^{v(a_v)}, \quad \forall\,
    a\in I_k^S.
  \end{equation}
\end{itemize}
\end{thm}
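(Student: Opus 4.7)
The plan is to handle uniqueness and existence separately; the substantive content lies in the existence statement, which amounts to the main theorem of complex multiplication of Shimura--Taniyama (in its id\`elic reformulation due to Weil).

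\textbf{Uniqueness.} Suppose $\varepsilon_1$ and $\varepsilon_2$ both satisfy (a)--(c), with respective finite sets $S_1, S_2$. Setting $S := S_1 \cup S_2$, both satisfy (c) for this common $S$, so $\eta := \varepsilon_1 \varepsilon_2^{-1}$ is a continuous homomorphism $I_k \to K^\times$ that is trivial on $k^\times \cdot I_k^S$. I claim this subgroup is dense in $I_k$: given $a = (a_v) \in I_k$, weak approximation at the finite set $S$ produces $\alpha \in k^\times$ arbitrarily close to $a_v$ for each $v \in S$; the id\`ele $b$ defined by $b_v := 1$ for $v \in S$ and $b_v := \alpha^{-1} a_v$ otherwise then lies in $I_k^S$, and $\alpha b$ approximates $a$, since $(\alpha b)_v = \alpha$ for $v \in S$ and $(\alpha b)_v = a_v$ for $v \notin S$. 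Since $K^\times$ is discrete and $\eta$ is continuous, $\eta^{-1}(1)$ is both open and closed, and contains a dense subgroup, so $\eta \equiv 1$.

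\textbf{Existence.} The candidate is built from the Galois action on the torsion points of $A$. Since $H_1(A_\C,\Q)$ is free of rank one over $K$, the Tate module $V_\ell(A)$ is free of rank one over $K_\ell := K \otimes_\Q \Q_\ell$, so the $\ell$-adic Galois representation $\rho_\ell$ factors through a continuous character $\chi_\ell \colon \Gamma_k^{\mathrm{ab}} \to K_\ell^\times$. Via the reciprocity map of class field theory, each $\chi_\ell$ lifts to a continuous id\`elic character $\tilde\chi_\ell \colon I_k \to K_\ell^\times$ that is trivial on $k^\times$. The main theorem of complex multiplication of Shimura--Taniyama determines $\tilde\chi_\ell$ explicitly: for each finite place $v \notin S$ not above $\ell$, the uniformizer id\`ele at $v$ is sent to the Frobenius element $\pi_v \in K^\times \subset K_\ell^\times$, while the restriction of $\tilde\chi_\ell$ to $k_\ell^\times$ agrees (up to inversion and a twist by $\psi_0$) with the algebraic map $\psi_\ell$ of \eqref{eq:3.3}. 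Combining the $\tilde\chi_\ell$ for all $\ell$ with the algebraic characters $\psi_0, \psi_\ell, \psi_\infty$ assembles a well-defined homomorphism $\varepsilon \colon I_k \to K^\times$: on the dense subgroup $k^\times \cdot I_k^S$ (decomposition unique because $k^\times \cap I_k^S = \{1\}$ once $S \supseteq V_\infty^k$), it is given by $(\alpha, b) \mapsto \psi_0(\alpha) \prod_{v \notin S} \pi_v^{v(b_v)}$, manifestly with open kernel and satisfying (a) and (c) tautologically.

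\textbf{The main obstacle.} The hard part is the compatibility between the purely algebraic recipe $\psi_0$ on $k^\times$ and the arithmetic Frobenius elements $\pi_v$ at places of good reduction: this is the Shimura--Taniyama reflex formula, which expresses $\pi_v$ in terms of the reflex type norm $N_{\Phi_E}$ of the prime of $E$ below $v$. This formula is exactly what guarantees that the various local prescriptions glue together into a single $K^\times$-valued id\`elic character $\varepsilon$, and is the substantive input taken from \cite{shimura-taniyama, weil:1955, serre-tate}.
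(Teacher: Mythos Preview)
The paper does not give its own proof of this theorem: it is quoted verbatim as \cite[Theorem~10]{serre-tate}, a reformulation of results of Shimura--Taniyama and Weil, and is used as a black box. So there is nothing to compare your argument against in the paper itself.

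That said, your uniqueness argument is correct and clean: the density of $k^\times\cdot I_k^S$ via weak approximation, together with continuity, pins $\varepsilon$ down.

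Your existence sketch identifies the right ingredients, but one step is not quite closed. You define $\varepsilon$ on the dense subgroup $k^\times\cdot I_k^S$ by $(\alpha,b)\mapsto \psi_0(\alpha)\prod_{v\notin S}\pi_v^{v(b_v)}$ and then assert ``manifestly with open kernel.'' But $k^\times\cdot I_k^S$ is dense and \emph{not open} in $I_k$, and the visible piece of the kernel, namely $\prod_{v\notin S}\calO_v^\times$, is likewise not open in $I_k$ (the $S$-components are pinned to~$1$). So neither continuity of the partial map nor openness of the kernel in $I_k$ follows from this description alone, and hence neither does the extension of $\varepsilon$ to all of $I_k$. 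The standard route (and what Serre--Tate actually do) is the one you allude to but do not carry out: \emph{define} $\varepsilon(a):=\tilde\chi_\ell(a)\,\psi_\ell(a_\ell)$ on all of $I_k$ for a fixed prime~$\ell$, use the Shimura--Taniyama congruence to show this lies in $K^\times\subset K_\ell^\times$ and is independent of~$\ell$, and then deduce (b) by observing that on a compact open subgroup of $I_k$ the value lands in a finitely generated subgroup of $K^\times$, forcing the kernel to be open. Your final paragraph correctly isolates the Shimura--Taniyama reflex formula as the substantive input; it is precisely what makes this global definition consistent.
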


The last condition means that, 
for $v\not\in S$ one has $\varepsilon(\varpi_v)=\pi_v$, where
$\varpi_v$ is any uniformizer of the completion $k_v$ at $v$, . 

Let 
\[ \rho_\ell:\Gamma_k \to \GL(V_\ell) \]
be the $\ell$-adic Galois representation associated to the Tate module
$V_\ell(A)$. As the image of $\rho_\ell$ is contained in
$K_\ell^\times$, the map $\rho_\ell$ factors 
through $\Gal(k^{\rm ab}/k)$, where
$k^{\rm ab}$ is the maximal abelian extension of $k$. Class field
theory allows us to interpret $\rho_\ell$ as a homomorphism
\[ \rho_\ell: I_k \to K_\ell^\times \]
which is trivial on $k^\times$. If $v\not\in (S_A\cup V^k_\ell)$, then
$\rho_v$ is {\it unramified at $v$} (i.e. $\rho_\ell$ is trivial on
$\calO_v^\times$) and $\rho_\ell(\varpi_v)=\pi_v$. (The normalization
of the Artin map $I_k\to \Gal(k^{\rm ab}/k)$ used in
Serre-Tate~\cite{serre-tate} 
sends each uniformizer $\varpi_v$ to the {\it arithmetic} Frobenius
automorphism ${\rm Fr}_v$.)   

\begin{thm}\label{3.2}
  For each prime number $\ell$, we have 
  \begin{equation}
    \label{eq:3.6}
    \rho_\ell(a)=\varepsilon(a) \psi_\ell(a_\ell^{-1}),
    \quad \forall\, a\in I_k,     
  \end{equation}
  where $a_\ell$ denotes the component of $a$ in $k_\ell^\times$, and
  $\psi_\ell:k_\ell^\times \to K_\ell^\times$ is the map defined
  in (\ref{eq:3.3}). 
\end{thm}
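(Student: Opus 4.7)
The plan is to check that both sides of (\ref{eq:3.6}), viewed as continuous homomorphisms $I_k\to K_\ell^\times$, coincide on a topologically generating subset of $I_k$, namely on the principal id\`eles $k^\times$ and on the uniformizers $\varpi_v$ for $v$ lying outside a sufficiently large finite set $S\supset V^k_\infty\cup S_A\cup V^k_\ell$ that also contains the exceptional set from Theorem~\ref{3.1}. Writing $f(a):=\varepsilon(a)\,\psi_\ell(a_\ell^{-1})$, both $\rho_\ell$ and $f$ are continuous: $\rho_\ell$ by construction, $\varepsilon$ by Theorem~\ref{3.1}(b), and $\psi_\ell$ because it is the morphism of $\Q_\ell$-points of the algebraic map $\psi:T^k\to T^K$.

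For $a\in k^\times$ diagonally embedded in $I_k$, class field theory gives $\rho_\ell(a)=1$. On the other side, Theorem~\ref{3.1}(a) yields $\varepsilon(a)=\psi_0(a)$, and since $\psi_0$ and $\psi_\ell$ arise by evaluating the same algebraic morphism $\psi$ on the $\Q$- and $\Q_\ell$-points of $T^k$, we have $\psi_\ell(a)=\psi_0(a)$; hence $f(a)=\psi_0(a)\,\psi_0(a)^{-1}=1$. Next, for a uniformizer $\varpi_v$ at $v\notin S$, Theorem~\ref{3.1}(c) gives $\varepsilon(\varpi_v)=\pi_v$, and since $v\nmid\ell$ the $\ell$-component $(\varpi_v)_\ell$ is trivial, so $\psi_\ell((\varpi_v)_\ell^{-1})=1$ and thus $f(\varpi_v)=\pi_v$. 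Meanwhile $\rho_\ell$ is unramified at $v$ (because $v\notin S_A\cup V^k_\ell$), and via the Artin map together with the identification of the Tate module of $A$ with that of the good reduction $A(v)$, the element $\varpi_v$ acts by the Frobenius endomorphism $\pi_{A(v)}=i_v(\pi_v)$; therefore $\rho_\ell(\varpi_v)=\pi_v$ as well.

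It remains to show that the two continuous homomorphisms, now known to agree on $k^\times$ and on $\{\varpi_v:v\notin S\}$, coincide on all of $I_k$. Setting $g:=\rho_\ell\cdot f^{-1}$, its kernel is a closed subgroup of $I_k$ containing the above set, and my plan is to argue that the subgroup it topologically generates is already $I_k$: for any open subgroup $U\subset I_k$, the finite quotient $I_k/(k^\times\cdot U)$ is a quotient of the Galois group of a finite abelian extension of $k$ unramified outside $S$, and by Chebotarev density the Frobenius classes $[\varpi_v]$, $v\notin S$, surject onto this quotient. Hence $g\equiv 1$, yielding (\ref{eq:3.6}). The main obstacle I anticipate is making this last density step rigorous — it is essentially the uniqueness part of Theorem~\ref{3.1} re-proved for $K_\ell^\times$-valued homomorphisms — whereas the remaining steps are formal manipulations of the definitions combined with the geometric fact that Frobenius at a good place of reduction of an abelian variety acts on the $\ell$-adic Tate module by multiplication by the Frobenius endomorphism.
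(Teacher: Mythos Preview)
The paper does not give its own proof of Theorem~\ref{3.2}: it is quoted verbatim as Theorem~11 of Serre--Tate~\cite{serre-tate}, with the preceding sentence attributing both Theorems~\ref{3.1} and~\ref{3.2} to that reference as reformulations of results of Shimura--Taniyama and Weil. So there is no in-paper argument to compare against.

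That said, your proposal is essentially the Serre--Tate proof and is correct. The verification on $k^\times$ and on uniformizers $\varpi_v$ ($v\notin S$) is exactly as you write, and the ``obstacle'' you flag at the end is not a real obstacle: since $g=\rho_\ell\cdot f^{-1}$ is continuous with open kernel and trivial on $k^\times$, it factors through a finite ray class quotient $I_k/(k^\times U)$, and Chebotarev density gives that the classes of the $\varpi_v$ for $v\notin S$ already fill this finite group; hence $g\equiv 1$. This is precisely the uniqueness mechanism behind Theorem~\ref{3.1}, applied with target $K_\ell^\times$ instead of $K^\times$.
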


\section{Proof of MTC for CM abelian varieties}
\label{sec:04}

We preserve the notations and hypotheses in the previous section.
Let $T_0$ be the image of the homomorphism $\psi:T^k\to T^K$. The
Mumford-Tate conjecture for the abelian variety $(A,i)$ of CM type,
i.e. $G_{A,\ell}^0=MT(A)\otimes \Q_\ell$, 
will follow from the following two lemmas.

\begin{lemma}\label{4.1}
  We have $G^0_{A,\ell}=T_0\otimes \Q_\ell$.
\end{lemma}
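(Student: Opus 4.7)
The plan is to prove the two inclusions of the lemma separately, exploiting \thmref{3.2}, which gives the identity $\rho_\ell(a)=\varepsilon(a)\,\psi_\ell(a_\ell^{-1})$ for $a\in I_k$, together with the description of $T_0$ as the image of $\psi\colon T^k\to T^K$.

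For the inclusion $T_0\otimes\Q_\ell\subseteq G^0_{A,\ell}$: I would consider ideles $a\in I_k$ supported at the places above $\ell$, that is, with $a_v=1$ for every $v\notin V^k_\ell$. By part (b) of \thmref{3.1} the kernel of $\varepsilon$ is open, so one may choose an open subgroup $U\subseteq k_\ell^\times$ (viewed as sitting inside $I_k$ via the $\ell$-component) contained in $\ker\varepsilon$. Formula~(\ref{eq:3.6}) then reduces, for $a$ with $a_\ell\in U$, to $\rho_\ell(a)=\psi_\ell(a_\ell^{-1})$, so the image $\rho_\ell(\Gamma_k)$ contains $\psi_\ell(U^{-1})$. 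Since $\psi\colon T^k\to T_0$ is a surjection of $\Q$-tori, the induced map $\psi_\ell$ on $\Q_\ell$-points is open with finite cokernel, so $\psi_\ell(U^{-1})$ is an $\ell$-adic open subset of the irreducible $\Q_\ell$-variety $T_0\otimes\Q_\ell$; any such open is Zariski dense, so the Zariski closure $G_{A,\ell}$ contains $T_0\otimes\Q_\ell$, and by connectedness of the latter this inclusion lands in $G^0_{A,\ell}$.

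For the reverse inclusion $G^0_{A,\ell}\subseteq T_0\otimes\Q_\ell$: by \corref{2.2} the subtorus $T_0\otimes\Q_\ell$ equals the intersection of the kernels of all characters $\chi\in X^*(T^K)$ that vanish on $T_0$, so it suffices to show each such $\chi$ vanishes on $G^0_{A,\ell}$. For such $\chi$, since $\psi_\ell(a_\ell^{-1})\in T_0(\Q_\ell)\subseteq\ker\chi$, formula~(\ref{eq:3.6}) yields $\chi(\rho_\ell(a))=\chi(\varepsilon(a))$. The composite $\chi\circ\varepsilon\colon I_k\to\Qbar^\times$ has open kernel (inherited from \thmref{3.1}(b)), is trivial on $k^\times$ (because $\varepsilon|_{k^\times}=\psi_0$ factors through $T_0(\Q)\subseteq\ker\chi$ by \thmref{3.1}(a)), and is trivial on the identity component $(I_k)^0$ (since any open subgroup of a topological group contains it). Hence $\chi\circ\varepsilon$ factors as an open-kernel character of the norm-one idele class group, which is compact for a number field, so it has finite image. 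Consequently $\chi\circ\rho_\ell$ has finite image, and connectedness of $G^0_{A,\ell}$ forces $\chi$ to vanish on it.

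The delicate step is this finite-image assertion for $\chi\circ\varepsilon$, which combines the open-kernel condition from \thmref{3.1}(b), the triviality on $k^\times$ from part (a) of the same theorem, triviality on the archimedean identity component, and the classical compactness of the norm-one idele class group of a number field; once it is in hand, the rest is a direct reading of the reciprocity formula~(\ref{eq:3.6}) in terms of subtori of $T^K\otimes\Q_\ell$.
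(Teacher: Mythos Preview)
Your proof is correct and follows essentially the same strategy as the paper: both inclusions are read off from the reciprocity formula~(\ref{eq:3.6}), using Zariski density of an open subgroup of $k_\ell^\times$ for $T_0\otimes\Q_\ell\subset G^0_{A,\ell}$ and the finiteness of $I_k/(k^\times U)$ (with $U=\ker\varepsilon\supset (k_\infty^\times)^0$) for the reverse inclusion. Your packaging of the second inclusion via characters $\chi$ of $T^K$ vanishing on $T_0$ is simply the dual of the paper's direct use of the quotient $T^K(\Q_\ell)/T_0(\Q_\ell)$, and your appeal to compactness of the norm-one id\`ele class group is exactly the finiteness input the paper phrases as ``finiteness of class numbers''.
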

\begin{proof}
  By Theorem~{3.2}, the map $\rho_\ell$ agrees with $\psi_\ell^{-1}$ on an
  open subgroup $U_\ell$ of $k_\ell^\times$. Since $U_\ell$ is Zariski
  dense in $T^k$, the Zariski closure of the image $\rho_\ell(I_k)$
  contains the image of $\psi_\ell$ (as an algebraic $\Q_\ell$-torus), 
  which is $T_0\otimes \Q_\ell$. 
  This shows the inclusion $T_0 \otimes \Q_\ell\subset G^0_{A,\ell}$. 

  On
  the other hand, the map $\rho_\ell:I_k\to T^K(\Q_\ell)/T_0(\Q_\ell)$
  factors through the quotient:
  \begin{equation}
    \label{eq:4.1}
   \rho_\ell: I_k/k^\times U \to
  T^K(\Q_\ell)/T_0(\Q_\ell), 
  \end{equation}
where $U$ is the kernel of $\varepsilon$. Notice $U\supset
(k_\infty^\times)^0$. By the finiteness of 
class numbers, the group $I_k/k^\times U$ is finite.
Therefore, $\rho_\ell$ has finite image in
$T^K(\Q_\ell)/T_0(\Q_\ell)$. This shows the inclusion 
$G^0_{A,\ell}\subset T_0\otimes \Q_\ell$. 

We then conclude the equality $G^0_{A,\ell}=T_0\otimes
\Q_\ell$. \qed  
\end{proof}

\begin{lemma}\label{4.2}
  We have $MT(A)=T_0$.
\end{lemma}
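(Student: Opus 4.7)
The plan is to prove $MT(A) = T_0$ by identifying the cocharacter groups of both subtori of $T^K$ inside $X_*(T^K)$, invoking Corollary~\ref{2.2}(2). First, I would argue $MT(A) \subset T^K$: since $V$ is a free $K$-module of rank one, $T^K$ is the centralizer of $K$ in $\GL(V)$, and the Hodge decomposition $V_\C = V^{-1,0} \oplus V^{0,-1}$ is $K$-stable because $K$ acts by rational endomorphisms of $A$; hence the Hodge cocharacter $\mu$ factors through $T^K_\C$.

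Next I would compute $\mu$ explicitly in $X_*(T^K) = \Z[\Sigma_K^\vee]$. The $K\otimes\C$-module $V_\C$ decomposes as $\bigoplus_{\phi \in \Sigma_K} V_\phi$ with $K$ acting on $V_\phi$ via $\phi$, and the CM type condition gives $V^{-1,0} = \bigoplus_{\phi \in \Phi} V_\phi$; since $\mu(z)$ acts as $z$ on $V^{-1,0}$ and trivially on $V^{0,-1}$, one obtains $\mu = \sum_{\phi \in \Phi} \phi^\vee$. By the definition of $MT(A)$ as the minimal $\Q$-subtorus of $T^K$ whose base change contains the image of $\mu$, Corollary~\ref{2.2}(2) identifies $X_*(MT(A))$ with the smallest saturated $\Z[\Gamma_\Q]$-submodule of $X_*(T^K)$ containing $\mu$, equivalently the saturation of the $\Z$-span of the Galois orbit of $\mu$.

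The core computation is then to match $X_*(T_0)$ with this module. Using the inclusion $X_*(T^K) \subset X_*(T^k)$ from (\ref{eq:2.3}) and the formula $\psi = N_{\Phi_k^{-1}}$, a direct calculation yields $\psi_*(1^\vee) = \sum_{\tau \in \Phi_k} \tau^\vee$, where $1 \in \Sigma_k$ is the identity embedding; by the definition $\Phi_k = \{\wt\phi : \wt\phi|_K \in \Phi\}$ and (\ref{eq:2.3}), this sum rewrites as $\sum_{\phi \in \Phi} \phi^\vee = \mu$. Because $\psi$ is defined over $\Q$, $\psi_*$ is $\Gamma_\Q$-equivariant, so letting $\wt\phi$ range over $\Sigma_k$ shows $\psi_*(X_*(T^k))$ is precisely the $\Z$-span of the $\Gamma_\Q$-orbit of $\mu$ (using that this orbit action on $X_*(T^K)$ factors through $\Gal(k/\Q)$ since $k$ contains the Galois closure of $K$). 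Since $\psi : T^k \onto T_0$ is surjective, $X_*(T_0)$ is the saturation of this image in $X_*(T^K)$, hence coincides with $X_*(MT(A))$; Corollary~\ref{2.2}(2) then yields $T_0 = MT(A)$.

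The main obstacle is the bookkeeping around two distinct actions on $X_*(T^k)$: the canonical $\Gamma_\Q$-action for which $\sigma \cdot \wt\phi^\vee = (\sigma\wt\phi)^\vee$ (as in (\ref{eq:2.2})), and the pushforward $\sigma_*$ induced by $\sigma \in \Gal(k/\Q)$ viewed as a $\Q$-automorphism of $T^k$, for which $\sigma_*(\wt\phi^\vee) = (\wt\phi \circ \sigma^{-1})^\vee$. The decomposition $\psi_* = \sum_{\sigma \in \Phi_k^{-1}} \sigma_*$ uses the latter, while the final identification with the Galois orbit of $\mu$ uses the former, and conflating them would derail the key computation $\psi_*(1^\vee) = \mu$.
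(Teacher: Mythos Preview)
Your proposal is correct and follows essentially the same approach as the paper: compute $\mu=\sum_{\phi\in\Phi}\phi^\vee$, identify $X_*(MT(A))$ as the saturation of the $\Z$-span of the Galois orbit of $\mu$, and show that $\psi_*(X_*(T^k))$ equals this span by computing $\psi_*(\tau^\vee)={}^\tau\mu$ (the paper does this for general $\tau$ directly, while you do it for $\tau=1$ and then invoke $\Gamma_\Q$-equivariance of $\psi_*$). Your explicit separation of the canonical Galois action from the pushforward $\sigma_*$ is exactly the bookkeeping the paper carries out in its computation of $(N_{\Phi_k^{-1}})_*(\tau^\vee)$.
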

\begin{proof}
The Hodge cocharacter $\mu:\C^\times \to (K\otimes \C)^\times$ has the
property 
\[ \<\phi, \mu\>=
\begin{cases}
  1 & \text{if $\phi\in \Phi$;} \\
  0 & \text{otherwise.}
\end{cases}\]
Therefore $\mu=\sum_{\phi\in \Phi} \phi^\vee$. 
As $MT(A)$ is the smallest $\Q$-torus of $T^K$ containing the 
image of $\mu$, its cocharacter group is equal to 
the saturation of the sublattice
\[ \Z\,[\, {}^\sigma\! \mu\ ;\ \sigma\in \Gal(k/\Q)\, ]\subset
X_*(T^K). \] 

Now we want to determine the image of the map
$X_*(T^k)\to X_*(T^K)\subset X_*(T^k)$ induced by $\psi$. 
Note that the map induced by $\psi$ is that induced by the reflex norm
map $N_{\Phi_k^{-1}}$. We claim 
\begin{equation}
  \label{eq:4.2}
  \psi_*(\tau^\vee)={}^\tau\! \mu \quad 
\text{in $X_*(T^K)=\Z[\Sigma_K^\vee]\subset \Z[\Sigma_k^\vee]$}
\end{equation}
for every $\tau\in \Sigma_k$.

For $\sigma\in \Gal(k/\Q)$, the isomorphism $\sigma:k\to k$ induces an
isomorphism $\sigma:T^k\to T^k$ of algebraic $\Q$-tori. The pullback
map $\sigma^*: X^*(T^k)\to X^*(T^k)$ is given by $\sigma^* \tau=\tau
\sigma$. 
For the push-forward map $\sigma_*$ we have 
$\sigma_* \tau^\vee=(\tau\circ \sigma^{-1})^\vee$ for $\tau\in
\Sigma_k$; this follows from 
\[ \<\tau', \sigma_* \tau^\vee\>=\<\tau' \sigma, \tau^\vee\>=
\begin{cases}
  1 & \text{if $\tau' \sigma=\tau$ (or $\tau'=\tau \sigma^{-1}$);}\\
  0 & \text{otherwise.} 
\end{cases} \]
We compute
\[ (N_{\Phi_{k}^{-1}})_*(\tau^\vee)=\sum_{\sigma\in \Phi_k^{-1}}
  (\tau\circ \sigma^{-1})^\vee=\sum_{\sigma\in \Phi_k} (\tau\circ
  \sigma)^\vee=\sum_{\phi\in \Phi} (\tau\circ \phi)^\vee. \]
On the other hand we also have 
\[ {}^\tau \mu=\sum_{\phi\in \Phi} {}^\tau \phi^\vee=\sum_{\phi\in
  \Phi} (\tau\circ \phi)^\vee. \]
This proves our claim. 

It follows from our claim that the torus $T_0$ corresponds 
to the saturation of the $\Z$-sublattice 
\[ \Z\,[\,{}^\tau\! \mu\ ;\ \tau\in \Gal(k/\Q)\,]\subset X_*(T^K). \]
By Corollary~\ref{2.2}, one has $MT(A)=T_0$.  \qed
\end{proof}

\section{The MTC for CM motives}
\label{sec:05}

In this section we extend Pohlmann's theorem to CM motives.
It is more convenient to extend the CM field $K$ to a CM algebra, that
is, a product of CM fields. 
We consider abelian varieties of CM type $(K,\Phi)$, where $K$ is a CM
algebra and $\Phi$ is a CM type. As the base field $k$ does not play a
role in the MTC, we may assume that 
it is Galois over $\Q$ containing $k$ and that the monodromy group 
$\rho_\ell(\Gamma_k)$ is contained in $G_{A,\ell}^0(\Q_\ell)$. 
It is easy to show the MTC for
abelian varieties of CM type can be reduced to the case where $K$ is a
CM field and thus holds.     
 
By a CM motive we mean a CM motives in Andr\'e's category of motives 
with respect to motivated cycles. We refer to \cite{andre:motif} or 
\cite{deligne:period} for the formal definition. 
A result states that every CM motive is a
summand of a tensor product of copies of the motive of a 
abelian variety of CM type 
(see \cite[Proposition 1.1]{ogus:chowla-selberg}).  

Suppose $M$ is a CM motive over a number field $k$. Then replacing $k$
by a suitable finite extension if necessary, there is an
abelian variety of CM type over $k$ so that $M$ is a summand of a
tensor product of the motive $h(A)$ of $A$. As before, let 
$V=H_1(A_\C,\Q)$ and we identify
$V_\ell:=V\otimes \Q_\ell$ with $V_\ell(A)$. Write 
\[ V(m,n,r)=V^{\otimes m} \otimes_\Q \check V^{\otimes
  n} \otimes_\Q \Q(r) \text{\ and\ } 
V_\ell(m,n,r)=V_\ell^{\otimes m} 
\otimes_{\Q_\ell} \check V_\ell^{\otimes n} \otimes_{\Q_\ell}
\Q_\ell(r), \]
respectively, where $m$ and $n$ are nonnegative integers and $r\in
\Z$. We have a canonical isomorphism 
$V(m,n,r)\otimes \Q_\ell\simeq V_\ell(m,n,r)$.

As a remark in the Introduction, the Tannakian
category $\calT(V)$ generated by the Hodge structure $V\oplus \Q(1)$
is equivalent to the Tannakian category $\calT(V_\ell)$ generated by
the $\Q_\ell$-representation $V_\ell\oplus \Q_\ell(1)$ of $\Gamma_k$.     
As a result, for any Hodge substructure $W\subset V(m,n,r)$, the
$\Q_\ell$ subspace $W\otimes \Q_\ell\subset V(m,n,r)\simeq
V_\ell(m,n,r$ is stable under the $\Gamma_k$-action. Conversely 
any subrepresentation $W_\ell$ of $V_\ell(m,n,r)$ is of the form
$W\otimes \Q_\ell$ for a unique Hodge substructure $W\subset
V(m,n,r)$.      

To prove the MTC for the CM motive $M$, one may show that the Tannakian
categories $\calT(M_B)$ and $\calT(M_\ell)$ are equivalent, where
$M_B$ and $M_\ell$ are the Betti and $\ell$-adic realizations of $M$,
respectively. Suppose that $W$ is an object in $\calT(M_B)$. Then
$W\otimes \Q_\ell$ gives an object in $\calT(M_\ell)$ since $W\subset
M_B(m,n,r)$ for some integers $m,n,r$ and hence $W\subset V(m',n', r')$
for some other integers $m',n',r'$. Conversely, any object
$W_\ell\subset M_\ell(m,n,r)\subset V_\ell(m',n',r')$ is of the form
$W\otimes \Q_\ell$ for a unique object $W\subset V(m',n',r')$. It
follows from the uniqueness that $W\subset M_B(m,n,r)$. This proves
the equivalence of $\calT(M_B)$ and $\calT(M_\ell)$ and hence the MTC
for $M$. 

\section*{Acknowledgments}
  The present article is prepared and revised while the author's stays 
  in the RIMS, Kyoto University and in the Max-Planck-Institut f\"ur
  Mathematik in Bonn. 
  He thanks Akio Tamagawa for helpful discussions and 
  both institutions for kind hospitality 
  and excellent working conditions. Special thanks go to the referee
  for careful reading and helpful suggestions which improve 
  results of the paper.   
  The author is partially supported by the grants MoST
  100-2628-M-001-006-MY4 and 103-2918-I-001-009.

\end{document}